\documentclass[a4paper,10pt]{amsart}

\usepackage{amsthm}
\usepackage{amssymb}
\usepackage{amsmath}
\usepackage{mathtools}
\usepackage{pdfpages}
\usepackage{changes}
\usepackage{exscale,color,amsopn}
\usepackage[colorlinks=true,pdftex,unicode=true,linktocpage,bookmarksopen,hypertexnames=false]{hyperref}
\usepackage{biblatex}
\addbibresource{refs.bib}
\usepackage{colortbl}
\usepackage{enumitem}
\usepackage{verbatim}
\usepackage[capitalise]{cleveref}
\usepackage{listings}
\usetikzlibrary{trees}
\renewcommand{\leq}{\leqslant}
\renewcommand{\geq}{\geqslant}

\overfullrule=1mm

\DeclareMathOperator{\id}{id}

\DeclareMathOperator{\Soc}{Soc}
\DeclareMathOperator{\Sym}{Sym}
\DeclareMathOperator{\Sq}{Sq}

\DeclareMathOperator{\DS}{DS}

\DeclareMathOperator{\supp}{supp}

\newcommand{\C}{\mathbb{C}}
\newcommand{\Z}{\mathbb{Z}}

\newcommand{\Aut}{\operatorname{Aut}}

\newcommand{\Ind}{\mathrm{Ind}}

\newcommand{\Triv}{\operatorname{Triv}}
\newcommand{\G}{\mathcal{G}}

\newcommand{\cox}[1]{\overline{#1}}

\makeatletter
\numberwithin{equation}{section}
\numberwithin{figure}{section}
\numberwithin{table}{section}
\newtheorem{thm}{Theorem}[section]
\newtheorem*{thm*}{Theorem}
\newtheorem{lem}[thm]{Lemma}

\newtheorem{pro}[thm]{Proposition}
\newtheorem{prodef}[thm]{Proposition-Definition}

\newtheorem*{convention*}{Convention}

\theoremstyle{definition}
\newtheorem{defn}[thm]{Definition}

\newtheorem{rem}[thm]{Remark}
\newtheorem{exa}[thm]{Example}

\makeatother

\title{On Dehornoy's representation for the Yang--Baxter equation}
\author{Carsten Dietzel, Edouard Feingesicht, Silvia Properzi}

\address[Carsten Dietzel]{Normandie Univ, UNICAEN, CNRS, LMNO, 14000 Caen, France}
\email{carsten.dietzel@unicaen.fr}

\address[Edouard Feingesicht]{Normandie Univ, UNICAEN, CNRS, LMNO, 14000 Caen, France}
\email{edouard.feingesicht@unicaen.fr}

\address[Silvia Properzi]{Department of Mathematics and Data Science, Vrije Universiteit Brussel, Pleinlaan 2, 1050 Brussel, Belgium}
\email{Silvia.Properzi@vub.be}
	
\date{\today}
\begin{comment}

\makeatletter
\@namedef{subjclassname@2020}{\textup{2020} Mathematics Subject Classification}
\makeatother
\subjclass[2020]{16T25, 05E10, 20C05, 20C07}
\keywords{Yang--Baxter equation, Monomial representations, Left-braces}    
\end{comment}
\begin{document}

\begin{abstract}
    This article investigates Dehornoy's monomial representations for structure groups and Coxeter-like groups associated with a set-theoretic solution to the Yang--Baxter equation.
    Using the brace structure of these groups and the language of cycle sets, we prove that the irreducibility of the associated monomial representations is equivalent to the indecomposability of the underlying solutions, except when the Dehornoy class is two. For indecomposable solutions, we show that these representations are induced from certain explicitly constructed one-dimensional representations.
\end{abstract}
\maketitle

\section{Introduction}
The \emph{Yang--Baxter equation} (YBE) arises in several areas of mathematics and physics, 
including statistical mechanics,
quantum groups, braid group theory, integrable systems and low-dimensional topology \cite{Turaev_links}. 
Originally introduced in the context of solvable models in statistical mechanics \cite{Baxter_YB,Yang_YB}, 
the YBE can be formulated in a set-theoretic framework following Drinfeld's suggestion \cite{Drinfeld_problems}. 
A \emph{set-theoretic solution to the Yang--Baxter equation} (or simply \emph{solution})
consists of a non-empty set $X$ and a bijection
\[
r:X^2 \longrightarrow X^2, \quad (x,y) \mapsto (\lambda_x(y), \rho_y(x)),
\]
satisfying
\[
(r\times \id)(\id\times r)(r\times \id)=(\id\times r)(r\times \id)(\id\times r).
\]
Solutions are called \emph{involutive} if 
$r^2 =\id_{X^2}$ and \emph{non-degenerate} if the maps $\lambda_x$ and $\rho_x$ are bijections of $X$ for every $x\in X$. 
Besides the simpler, combinatorial nature of this equation, 
Drinfeld’s intention was to subject these solutions to a deformation process
from which new linear solutions can be obtained. 
Since then, set-theoretic solutions to the Yang--Baxter equation 
have attracted significant interest and have been extensively studied,
see for example \cite{CO_simple,Cedo_brace,ESS_group,MR4717099,Rump_DecSqfree}.

A convenient framework to study involutive solutions is provided by \emph{cycle sets}, introduced by Rump \cite{Rump_braces}. 
A \emph{cycle set} is a set $X$ with a binary operation 
$* \colon X^2 \to X$ such that for all $x,y,z \in X$, the identity
\[
(x*y)*(x*z) = (y*x)*(y*z)
\]
holds and all left multiplications
$\sigma_x:y\mapsto x*y$ are bijective. 
Finite cycle sets are in bijective correspondence with finite involutive 
non-degenerate set-theoretical solutions to the Yang--Baxter equation.
More precisely, given a finite cycle set $(X,*)$ then $(X,r_X)$
is an involutive solution,
where 
\[
r_X:X^2\to X^2, \quad
(x,y)\mapsto(\sigma_y^{-1}(x)*y,\sigma_y^{-1}(x)).
\]

In this paper, we focus on involutive 
non-degenerate set-theoretical solutions to the Yang--Baxter equation, 
i.e. solutions whose underlying set is finite.
This allows us to make use of this correspondence 
and to work in the combinatorial setting of cycle sets, 
where the structure is encoded by a single binary operation rather than the maps $\lambda_x$ and $\rho_x$ for $x\in X$. 

A key notion in this context is that of \emph{indecomposability}. 
A solution $(X,r)$  is called \emph{indecomposable}
if there is no partition $X=X_1\sqcup X_2$ with 
$X_i\neq \emptyset$ for $i \in \{1,2\}$
such that $r(X_i^2)=X_i^2$ for $i\in \{1,2\}$.
In terms of cycle sets, this means that the 
associated cycle set 
cannot be partitioned into non-empty subsets that are closed under the operation $*$.
A cycle set with this property is thus also called
\emph{indecomposable}. 
In a sense, indecomposable cycle sets can be viewed as elementary building blocks for general cycle sets. Additionally, they exhibit a much more rigid structure. 
This is reflected in the fact that, 
while the number of cycle sets of a given cardinality grows rapidly, 
the number of indecomposable ones remains comparatively small, 
as shown by computations in \cite{AMV_Cyclesets}
and by the fact that there exists a unique (up to isomorphism) indecomposable cycle set of prime cardinality $p$ \cite[Theorem 2.13]{ESS_group}. 
This indicates strong structural constraints that 
can be effectively studied using algebraic methods. 
Indeed, such an approach was initiated in the works of Etingof, Schedler and Soloviev \cite{ESS_group} and Gateva-Ivanova and Van den Bergh \cite{GIVdB_IType}.
To each involutive solution $(X,r)$, Etingof, Schedler, and Soloviev
associated a \emph{structure group} $G(X,r)$
defined by the generating set $X$ together with the relations
\[
xy = \lambda_x(y) \rho_y(x), \quad x,y \in X.
\]
Equivalently, one can view this group as the structure group of the associated cycle set $(X,*)$, which is defined as
\[
G(X,*)=\langle X\mid (x \ast y) x = (y \ast x) y \rangle.
\]
Both perspectives yield the same group, which encodes combinatorial data of $(X,r)$ 
and provides a natural setting to study properties of the solution.
Moreover, the structure group admits an additional additive structure, 
given by a second operation $+$ which,
together with the multiplicative structure, turns it into a \emph{brace}. 
This construction was introduced by Rump \cite[Section 2]{Rump_braces} 
and later made more explicit by Cedó, Jespers, and Okniński \cite[Theorem 1]{CJO_bracesYBE}. 
A \emph{brace} consists of an abelian group $(B,+)$ endowed with another group structure $(B,\cdot)$ satisfying the compatibility:
\[
a(b + c) = ab - a + ac, \quad a,b,c \in B.
\]
A significant step in understanding the structure group $G(X,*)$
was made by Chouraqui \cite[Theorem 3.3]{Chouraqui_YBEGarside}, 
who showed that it is a Garside group, 
a feature reminiscent of Artin--Tits groups of spherical type. 
Building on this perspective, 
Dehornoy \cite{Dehornoy_RC} introduced a finite quotient $\cox{G}(X,*)$ of $G(X,*)$,
called a \emph{Coxeter-like group},
which plays a role analogous to that of finite Coxeter groups for spherical Artin--Tits groups.  
In addition, Dehornoy introduced \emph{monomial representations}
of both $G(X,*)$ and $\cox{G}(X,*)$ of rank $|X|$.
Under this representation of $G(X,*)$, every element 
is represented by a \emph{monomial} matrix with entries in the fraction field $\C(q)$, 
meaning that this matrix has exactly one nonzero entry in each row and column. 
Such a matrix can be factorized into a permutation part,
determined by the natural action of $G(X,*)$ on $X$ induced by the maps $\sigma_x$,
and a diagonal part, determined by the additive structure of $G(X,*)$.
The monomial representation of $\cox{G}(X,*)$ is then obtained
by specializing $q$ to a primitive $d$-th root of unity,
where $d$ is an integer called the \emph{Dehornoy class of $(X,*)$}.
These representations turn out to be faithful \cite[Proposition 5.13]{Dehornoy_RC}.
As we will see in \cref{sec:prelim}, the latter construction can be extended
to finite quotients $\cox{G}_{l}(X,*)$ of $G(X,*)$ that are
defined for $l\geq 1$ in a way that $\cox{G}_1(X,*)=\cox{G}(X,*)$.
Upon specializing $q$ to a primitive $ld$-th root of unity,
one obtains a faithful monomial representation of  $\cox{G}_{l}(X,*)$.

In this work
we investigate the irreducibility of these monomial representations in terms of the combinatorial structure of the underlying cycle set. 
We show that irreducibility is closely linked to indecomposability. 
Our main result, obtained by combining \cref{thm:irreducibility_of_theta_bar} and \cref{prop:ind_irr},
is the following:
\begin{thm*}
Let $(X,*)$ be a finite cycle set. 
Then the following are equivalent:
\begin{enumerate}
    \item $(X,*)$ is indecomposable.
    \item The monomial representation of the structure group is irreducible.
\end{enumerate}
If moreover $l>1$ or the Dehornoy class of $(X,*)$ is greater than $2$,
then indecomposability is also equivalent to the irreducibility of the monomial representation of $\cox{G}_l(X,*)$.
\end{thm*}
The proof relies on techniques from brace theory and on the interplay between $G(X,*)$ 
and the Sylow subgroups of the \emph{permutation group of $(X,*)$},
\[
\G(X,*)=\langle\sigma_x\mid x\in X\rangle\leq \Sym_{X}.
\]
Indecomposability forces any invariant subspace 
under the monomial representation to contain a basis vector,
which then implies irreducibility. 
Conversely, decomposable solutions naturally give rise to proper invariant subspaces, ensuring reducibility.

This paper is organized as follows.
In Section~\ref{sec:prelim}, we recall the necessary background material on
braces, cycle sets, Coxeter-like groups, Dehornoy’s monomial representations and their generalizations. 
In Section~\ref{sec:irreducibility}, we prove our main result, 
establishing irreducibility criteria in terms of 
indecomposability and the Dehornoy class.
In Section~\ref{sec:induction} we show that,
for indecomposable cycle sets, 
the monomial representations are induced 
from characters of certain subgroups,
via an explicit construction
described in \cref{thm:generic_monomials_are_induced}.

\section{Preliminaries}\label{sec:prelim}

\subsection{Braces}

Rump introduced braces in \cite{Rump_braces} as a generalization of radical rings in order to study involutive non-degenerate solutions of the Yang--Baxter equation.
A \emph{brace}, as reformulated in \cite{CJO_bracesYBE}, is a triple $(B,+,\circ)$, where $(B,+)$ is an abelian group and $(B,\circ)$ is a group such that
\[
a\circ(b+c)=a\circ b-a + a\circ c,
\]
for all $a,b,c\in B$, where $-a$ denotes the inverse of $a$ in $(B,+)$.
More generally, for $n\in\Z$ and $a\in B$, we will denote by $na$ the $n$-th power of $a$ in $(B,+)$ and by $a^n$ the $n$-th power of $a$ in $(B,\circ)$.
Moreover, it follows immediately from the definition that
the neutral elements of the two operations coincide;
hence, we will use a single symbol,
0, to denote both.

If $B$ is a brace, then the \emph{multiplicative group} $(B,\circ)$ acts by automorphisms on the \emph{additive group} $(B,+)$ via the $\lambda$-action, defined as
\[
\lambda_a(b)=-a+a\circ b.
\]

\begin{exa}
    If $(G,+)$ is an abelian group, then $\Triv(G)=(G,+,+)$ is a brace, called the \emph{trivial brace} on $G$. It is easily seen that trivial braces are characterized by the property that $(B,\circ)$ acts trivially on $(B,+)$ by means of the $\lambda$-action.
\end{exa}

A \emph{subbrace} of a brace $B$ is a subset $A \subseteq B$ that is a subgroup of both $(B,+)$ and $(B,\circ)$. A \emph{left ideal} of $B$ is a subset $I$ of $B$ such that $I$ is a subgroup of $(B,+),$ and $\lambda_a(I)\subseteq I$ for all $a\in B$. If additionally, $I$ is a normal subgroup of $(B,\circ)$, one calls $I$ an \emph{ideal}. For example, the \emph{socle} of $B$,
\[
\Soc(B)=\ker\lambda=\{a\in B\mid a\circ b=a+b\text{ for all }b\in B\},
\]
is an ideal of $B$.

\begin{convention*}
    Throughout the rest of this article, we will suppress $\circ$ for the multiplication in a brace and indicate it by juxtaposition.
\end{convention*}

\subsection{Cycle sets}
A \emph{cycle set} is a pair $X=(X,*)$,
where $X$ is a non-empty set
and $*: X \times X \to X$; $(x,y) \mapsto x\ast y$  is a binary operation such that the left multiplication by $x$,
\[
\sigma_x:y\mapsto x\ast y,
\]
is a bijection for each $x\in X$ and
\[
(x\ast y)\ast (x\ast z)=(y\ast x)\ast (y\ast z)
\]
holds for all $x,y,z\in X$. If additionally, the \emph{square map} $\Sq: x\mapsto x\ast x$ is bijective, we say that the cycle set is \emph{non-degenerate}.
Recall that a finite cycle set is always non-degenerate, as proven in \cite[Theorem 2]{Rump_DecSqfree}.

\begin{convention*}
    This article considers non-degenerate cycle sets only, so we will refer to \emph{non-degenerate cycle sets} more briefly as \emph{cycle sets}.
\end{convention*}

A cycle set $(X,\ast)$ is called \emph{indecomposable} if there are no proper partitions $X=X_1\sqcup X_2$ such that $X_1$ and $X_2$
are closed under the cycle set operation.

It can be shown that any brace $(B,+,\circ)$ becomes a cycle set under the mappings $\sigma_b=\lambda_b^{-1}$ for $b\in B$. On the other hand, cycle sets give rise to several braces, as well: for any cycle set $(X,\ast)$, define its \emph{permutation group} as the subgroup
\[
\G(X)=\langle \sigma_x\mid x\in X\rangle \leq \Sym_X,
\]
the latter denoting the symmetric group of permutations on the set $X$, acting from the \emph{left}.
Recall that, as proven in \cite[Proposition 2.12]{ESS_group}, $X$ is indecomposable if and only if $\G(X)$ acts transitively on $X$.
Moreover, for a cycle set $(X,*)$, letting $\lambda_x=\sigma_x^{-1}$, there is a unique way to equip $(\G(X),\circ)$ with an abelian group operation $+$ that satisfies $\lambda_x + \lambda_y = \lambda_x \lambda_{\lambda_x^{-1}(y)}$, such that $(\G(X),+,\circ)$ is a brace.

In the brace structure on $\G(X)$, the $\lambda$-action satisfies the relation
\begin{equation} \label{eq:lambda_compatibility}
\lambda_{\lambda_x}(\lambda_y) = \lambda_{\lambda_x(y)},
\end{equation}
for $x,y \in X$, therefore the $\lambda$-maps of the cycle set $X$ are compatible with the $\lambda$-action of $\G(X)$ under the map $X \to \G(X)$; $x \mapsto \lambda_x$.

More generally, using $\lambda$-notation to denote the image of $x \in X$ under the action of an element $g \in \G(X)$ by $\lambda_g(x)$, \cref{eq:lambda_compatibility} extends to 
\begin{equation} \label{eq:lambda_compatibility_2}
    \lambda_g(\lambda_x) = \lambda_{\lambda_g(x)}
\end{equation}
for $g \in \G(X)$, $x \in X$. As a consequence, the $\lambda$-action on the brace $\G(X)$ is compatible with the $\lambda$-action on $X$.

\begin{convention*}
    Due to \cref{eq:lambda_compatibility_2}, we use $\lambda$-notation both in the case when a brace derived from a cycle set $X$ (like $\G(X), G(X), \ldots$) acts on $X$ and in the case when such a brace acts on itself via its $\lambda$-action. No danger of confusion will arise from this convention as it will always be clear which set is acted upon.
\end{convention*}

\begin{exa} \label{exa:cyclic_cycle_set}
    For some $n>0$, let $X = \{x_1,\ldots,x_n\}$ and fix the permutation cycle $\sigma = (1 \ 2 \ldots \ n)$. Then the operation $x_i \ast x_j = x_{\sigma(j)}$ turns $X$ into a cycle set, called a \emph{cyclic} cycle set. It can be shown that $(X, \ast)$ is an indecomposable cycle set with permutation group $\G(X) = \left\langle (1 \ 2 \ldots n) \right\rangle \cong \Z_n$, the cyclic group of order $n$. Furthermore, $\G(X)$ is a trivial brace when equipped with the canonical brace structure described above.
\end{exa}

With any cycle set $(X,\ast)$ we also associate its \emph{structure group}
\[
G(X)=\langle X\mid (x \ast y) x = (y \ast x) y \rangle
\]
that contains $X$, as the canonical map $X\to G(X)$, $x\mapsto x$ can be shown to be injective \cite{ESS_group}.

Moreover, there is a unique way of defining an addition on $G(X)$ that extends 
$$x+y=x  \sigma_x(y) = x \lambda_x^{-1}(y)$$
for $x,y\in X$ and that provides $G(X)$ with a brace structure (see \cite{Cedo_brace}, for instance).
Under the cycle set structure of the thus constructed brace $G(X)$, the embedding $X \hookrightarrow G(X)$ identifies $X$ with a sub-cycle set of $G(X)$.

Note also that there is a surjective brace homomorphism $G(X)\to \G(X)$ given by $x\mapsto \lambda_x=\sigma^{-1}_x$ which has kernel $\Soc(G(X))$. This implies also that the map $\lambda: X\to \G(X)$, $x\mapsto \lambda_x$ is a cycle set homomorphism.

An important invariant of a non-degenerate cycle set $X$ that we are going to use throughout the whole paper is the  \emph{Dehornoy class}. It is the smallest positive integer $d$ such that $dx \in \Soc(G(X))$ for every $x\in X$.

There is also a different interpretation of the Dehornoy class, given in \cite{MR4717099}:
\begin{pro}
    The Dehornoy class $d$ of a finite cycle set $X$ is the least common multiple of the additive orders of the generators $\lambda_x \in \G(X)$ for $x \in X$. Equivalently, $d$ is the exponent of the group $(\G(X),+)$. 
\end{pro}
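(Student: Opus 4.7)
The plan is to interpret the Dehornoy class condition through the surjective brace homomorphism $\pi\colon G(X)\to\G(X)$, $x\mapsto\lambda_x$, whose kernel is $\Soc(G(X))$. Since $\pi$ respects addition, $dx\in\Soc(G(X))$ if and only if $d\lambda_x=0$ in $(\G(X),+)$, that is, if and only if the additive order of $\lambda_x$ divides $d$. Imposing this simultaneously for every $x\in X$, the Dehornoy class of $X$ is exactly the least common multiple of the additive orders of the generators $\lambda_x\in\G(X)$.

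It then remains to identify this LCM with the exponent of $(\G(X),+)$. One inequality is immediate; for the other, the key step is to show that $\{\lambda_x\mid x\in X\}$ also generates $\G(X)$ additively. Substituting $y=\lambda_x(z)$ into the defining identity $\lambda_x+\lambda_y=\lambda_x\lambda_{\lambda_x^{-1}(y)}$ yields
\[
\lambda_x\lambda_z=\lambda_x+\lambda_{\lambda_x(z)}\qquad(x,z\in X),
\]
and an induction on $n$ gives
\[
\lambda_{x_1}\lambda_{x_2}\cdots\lambda_{x_n}=\sum_{i=1}^{n}\lambda_{\lambda_{x_1}\cdots\lambda_{x_{i-1}}(x_i)},
\]
a sum of elements of the form $\lambda_y$ with $y\in X$, since $\G(X)$ permutes $X$. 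Because $\G(X)$ is finite, multiplicative inverses of $\lambda_x$ are positive powers, so $\{\lambda_x\mid x\in X\}$ also generates $(\G(X),\circ)$ as a group; via the displayed expansion, it then generates $(\G(X),+)$ as well.

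The exponent of an abelian group generated by a finite set equals the LCM of the orders of its generators, which completes the proof. The only mildly technical point is the inductive additive expansion of products of $\lambda$'s; the rest is a direct translation through $\pi$ together with the standard fact about exponents of finitely generated abelian groups.
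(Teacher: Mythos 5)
Your argument is correct, and it is worth noting that the paper itself gives no proof of this proposition at all --- it simply cites an external reference --- so your self-contained derivation is a genuine addition rather than a reproduction. Your two steps are exactly the right ones: first, since $\pi\colon G(X)\to\G(X)$, $x\mapsto\lambda_x$, is an additive (brace) homomorphism with kernel $\Soc(G(X))$, the condition $dx\in\Soc(G(X))$ translates precisely to $d\lambda_x=0$, identifying the Dehornoy class with the LCM of the additive orders of the $\lambda_x$; second, the exponent of $(\G(X),+)$ equals that LCM because the $\lambda_x$ generate the additive group. Two small points deserve tightening. The induction step behind
\[
\lambda_{x_1}\cdots\lambda_{x_n}=\sum_{i=1}^{n}\lambda_{\lambda_{x_1}\cdots\lambda_{x_{i-1}}(x_i)}
\]
does not follow from the displayed two-letter identity alone: passing from $n$ to $n+1$ you need either the general brace identity $g\circ h=g+\lambda_g(h)$ together with the equivariance $\lambda_g(\lambda_z)=\lambda_{\lambda_g(z)}$ (which holds because $\pi$ is a brace homomorphism and the $\lambda$-action on $\G(X)$ extends that on $X$), or else the left-distributivity axiom $a\circ(b+c)=a\circ b-a+a\circ c$ applied to the inductively expanded tail; either route is routine, but one of them should be said. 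Also, $\{\lambda_x\}$ generates $(\G(X),\circ)$ as a group by definition (since $\sigma_x=\lambda_x^{-1}$); what finiteness actually buys you is generation as a \emph{monoid}, i.e.\ that every element is a positive word $\lambda_{x_1}\cdots\lambda_{x_n}$, which is exactly what the expansion formula requires. With those clarifications the proof is complete.
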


Let $\pi(k)$ denote the set of prime divisors of an integer $k$. In \cite{Feingesicht_class}, the following properties are proven:
\begin{pro}
\label{primes dividing d and n}
    Let $X$ be a cycle set of size $n$ and Dehornoy class $d$.
     Then $d$ divides $|\G(X)|$ and $|\G(X)|$ divides $d^n$, so $\pi(d)=\pi(|\G(X)|)$.
     
     In particular, if $X$ is indecomposable, then 
     $\pi(n)\subseteq \pi(d)=\pi(|\G(X)|)$
\end{pro}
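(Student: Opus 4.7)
The plan is to combine the previous proposition---which identifies $d$ with the exponent of the abelian group $(\G(X),+)$---with elementary facts about finite abelian groups. Since $\G(X) \leq \Sym_X$ is finite and $(\G(X),+)$ is abelian, its exponent divides its order, which immediately gives $d \mid |\G(X)|$.

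The main step is the divisibility $|\G(X)| \mid d^n$, which I would deduce by showing that the additive group $(\G(X),+)$ is generated by the $n$ elements $\{\lambda_x : x \in X\}$: any abelian group of exponent $d$ generated by $n$ elements is a quotient of $(\Z/d\Z)^n$, so its order divides $d^n$. By definition, $\G(X)$ is \emph{multiplicatively} generated by the $\lambda_x$, and as it is finite every element is a positive word in them; hence it suffices to prove that the additive subgroup $A := \langle \lambda_x : x \in X \rangle_+$ is closed under~$\circ$. Combining the displayed relation $\lambda_x + \lambda_y = \lambda_x \lambda_{\lambda_x^{-1}(y)}$ with the universal brace identity $a \circ b = a + \lambda_a(b)$, a direct computation yields $\lambda_{\lambda_x}(\lambda_y) = \lambda_{\lambda_x(y)}$. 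Since $\lambda_x$ permutes $X$, the additive automorphism $\lambda_{\lambda_x}$ permutes the generating set $\{\lambda_y : y \in X\}$ and therefore stabilizes $A$. Then $\lambda_x \circ A = \lambda_x + \lambda_{\lambda_x}(A) = \lambda_x + A \subseteq A$, so $A$ is closed under left multiplication by each generator and hence equals $\G(X)$.

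With both divisibilities in hand, the equality $\pi(d) = \pi(|\G(X)|)$ is purely formal. For the final statement, I would invoke the characterization from \cite{ESS_group} recalled above: indecomposability of $X$ means that $\G(X)$ acts transitively on $X$, so by the orbit--stabilizer theorem $n = |X|$ divides $|\G(X)|$, giving $\pi(n) \subseteq \pi(|\G(X)|) = \pi(d)$.

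The main obstacle is really the identity $\lambda_{\lambda_x}(\lambda_y) = \lambda_{\lambda_x(y)}$: it is the heart of the argument, since it ties the brace-theoretic $\lambda$-action on $\G(X)$ back to the original cycle set action on $X$; without it, the bound $|\G(X)| \mid d^n$ would fail for a generic abelian group structure on the underlying set of $\G(X)$.
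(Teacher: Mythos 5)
Your argument is correct. Note that the paper itself offers no proof of this proposition; it is simply quoted from \cite{Feingesicht_class}, so your self-contained derivation cannot be compared line-by-line with anything in the text. The two divisibilities are handled exactly as one would hope: $d\mid|\G(X)|$ is immediate from the preceding proposition identifying $d$ with the exponent of $(\G(X),+)$, and the key step for $|\G(X)|\mid d^n$ is your identity $\lambda_{\lambda_x}(\lambda_y)=\lambda_{\lambda_x(y)}$, which does check out: putting $z=\lambda_x(y)$ in the defining relation $\lambda_x+\lambda_z=\lambda_x\lambda_{\lambda_x^{-1}(z)}$ gives $\lambda_x+\lambda_{\lambda_x(y)}=\lambda_x\circ\lambda_y$, whence $\lambda_{\lambda_x}(\lambda_y)=-\lambda_x+\lambda_x\circ\lambda_y=\lambda_{\lambda_x(y)}$. (This is precisely the fact the paper asserts informally in its convention that ``the $\lambda$-action on these braces is a mere extension of the $\lambda$-action on $X$.'') Since $\lambda_x$ permutes $X$, the additive automorphism $\lambda_{\lambda_x}$ preserves the set $\{\lambda_y: y\in X\}$ and hence the additive subgroup $A$ it generates, so $\lambda_x\circ A=\lambda_x+\lambda_{\lambda_x}(A)\subseteq A$; finiteness of $\G(X)$ then lets you exhaust all of $\G(X)$ by positive words, giving $A=\G(X)$ and $|\G(X)|\mid d^n$ as a quotient of $\Z_d^n$. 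The deduction $\pi(d)=\pi(|\G(X)|)$ and the orbit--stabilizer argument for $\pi(n)\subseteq\pi(d)$ in the indecomposable case are both standard and correctly applied. The only stylistic remark is that you could shorten the middle step by citing the paper's own statement that $(\G(X),+,\circ)$ is a brace whose addition satisfies $\lambda_x+\lambda_y=\lambda_x\lambda_{\lambda_x^{-1}(y)}$: this relation already shows directly that the set of positive $\circ$-words in the $\lambda_x$ coincides with the set of positive additive words in them, without explicitly invoking the $\lambda$-action on $\G(X)$.
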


Let $X$ be a cycle set with Dehornoy class $d$, then for any integer $l \geq 1$, the additive subgroup $ldG(X) = \{ (ld)g : g \in  G(X) \} \leq (G(X),+)$ is a left ideal of $G(X)$ that is contained in $\Soc(G(X))$ and therefore it is an ideal of $G(X)$. 
Consequently, we can form the quotient, which is then a skew brace.
\begin{defn} \label{defn:coxeter_like_group}
    Let $X$ be a cycle set with Dehornoy class $d$. Given an integer $l \geq 1$, we define the \emph{Coxeter-like group}
    \[
    \cox{G}_l(X) = G(X)/ldG(X).
    \]
\end{defn}

Note that the additive group of $\cox{G}_l(X)$ is isomorphic to $\Z_{ld}^X$. Also observe that, as $ldG(X) \subset\Soc(G(X))$, the canonical morphism $G(X)\to\mathcal G(X)$ factors through $\cox{G}_l(X)$, i.e $\mathcal G(X)=\cox{G}_l(X)/\Soc(\cox{G}_l(X))$.

The importance of the Coxeter-like groups lies in the fact that they play the role of a germ of the Garside structure on $G(X)$ (see \cite{Dehornoy_RC}).

When $l=1$, we will simply write $\overline G_1(X)$ as $\overline G(X)$.

\begin{convention*}
    From now on, we will often abbreviate $G(X)$ with $G$ when the context is clear. The same convention applies to $\G(X)$, $\cox{G}(X)$ and $\cox{G}_l(X)$.
\end{convention*}

\subsection{Induced representations}
In what follows, all modules are left modules.
Let $G$ be a group, $R$ a commutative ring and $V$ an $R$-module. 
It is well-known that a representation $\rho\colon G\to \Aut(V)$ is equivalent to an $R[G]$-module structure on $V$.

\begin{prodef}[{\cite[§43]{CR_Repr}}]
\label{pdef:induced}\index{Induced representation}
If $H$ is a subgroup of $G$ and $V$ is an $R[H]$-module, the \emph{induced $R[G]$-module} is defined as $\Ind_H^GV = R[G]\otimes_{R[H]} V$.
\end{prodef}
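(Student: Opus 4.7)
The Proposition--Definition asserts that $R[G]\otimes_{R[H]}V$ carries a natural structure of $R[G]$-module; the plan is to verify the well-definedness of this construction through the standard bimodule argument.

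First, I would observe that the inclusion $H\hookrightarrow G$ extends $R$-linearly to a ring homomorphism $R[H]\hookrightarrow R[G]$, and via right multiplication $R[G]$ becomes a right $R[H]$-module. Since left and right multiplication in the ring $R[G]$ commute by associativity, left multiplication by any element $y\in R[G]$ is a right $R[H]$-linear endomorphism of $R[G]$. Thus $R[G]$ naturally acquires the structure of an $(R[G],R[H])$-bimodule, and the tensor product $R[G]\otimes_{R[H]}V$ is a well-defined $R$-module.

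Next, I would define a left $R[G]$-action on the tensor product by $y\cdot(x\otimes v)=(yx)\otimes v$ on simple tensors, extended by $R$-linearity. To see that this descends through the $R[H]$-balancing, one checks for $h\in R[H]$ that
\[
y\cdot\bigl((xh)\otimes v\bigr)=(yxh)\otimes v=(yx)\otimes(hv)=y\cdot\bigl(x\otimes(hv)\bigr),
\]
using associativity in $R[G]$ together with the defining relations of $\otimes_{R[H]}$. The module axioms $1\cdot w=w$ and $(y_1y_2)\cdot w=y_1\cdot(y_2\cdot w)$ then follow from the corresponding identities in $R[G]$.

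There is essentially no genuine obstacle here: the whole content reduces to the elementary bookkeeping of bimodule structures on group algebras and the universal property of the tensor product, which is why the statement is formulated as a \emph{Proposition-Definition} with a reference to \cite[§43]{CR_Repr}. The subtlety, if any, is confirming that the commutation of left and right multiplication in $R[G]$ is precisely what allows the left $R[G]$-action to pass through the $R[H]$-relations, and this is automatic from the associative ring structure on $R[G]$.
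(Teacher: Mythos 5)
Your proposal is correct and is exactly the standard bimodule verification that the paper delegates to its citation of \cite[\S 43]{CR_Repr}; the paper gives no independent argument, so there is no divergence to report. The only content is that $R[G]$ is an $(R[G],R[H])$-bimodule via left and right multiplication, which you check properly.
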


By the correspondence between representations and modules, the \emph{induced representation} $\Ind_H^G\rho$ is the representation of $G$ associated with the $R[G]$-module $\Ind_H^GV$,
where $V$ is the $R[H]$-module associated with the representation $\rho$.

It is well-known that induced representations are connected to systems of imprimitivity.

\begin{defn}[{\cite[§50.1]{CR_Repr}}] \label{defn:system_of_imprimitivity}
    Let $V$ be an $R[G]$-module. A family of $R$-submodules $(U_i)_{i \in I}$ of $V$ is a \emph{system of imprimitivity} if the following three conditions are satisfied:
    \begin{enumerate}
        \item $V = \bigoplus_{i \in I} U_i$,
        \item $G$ permutes the family, i.e. for all $i \in I$, $g \in G$, there is a $j \in I$ such that $g\cdot U_i = U_j$,
        \item $G$ acts transitively on the family, i.e. for all $i,j \in I$, there is a $g \in G$ such that $g\cdot U_i = U_j$.
    \end{enumerate}
\end{defn}

\begin{pro} \label{pro:imprimitive_modules_are_induced}
    Let $V$ be an $R[G]$-module and $(U_i)_{i \in I}$ a system of imprimitivity thereof. For an $i_0 \in I$, let $G_0 = \{ g \in G: g\cdot U_{i_0} = U_{i_0} \}$. Then, by restriction, $U_{i_0}$ is an $R[G_0]$-module and there is a canonical isomorphism of $R[G]$-modules
    \[
    V \cong \Ind_{G_0}^G U_{i_0}.
    \]
\end{pro}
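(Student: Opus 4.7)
The plan is to construct an explicit $R[G]$-module isomorphism by exploiting the transitivity part of the system of imprimitivity to choose coset representatives adapted to the decomposition.

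First I would verify that $U_{i_0}$ really is an $R[G_0]$-module, which is immediate: by definition every $g \in G_0$ satisfies $g \cdot U_{i_0} = U_{i_0}$, so the $R[G]$-action restricts. Next, using the transitivity of the $G$-action on the family $(U_i)_{i \in I}$, for each $i \in I$ I would pick some $g_i \in G$ with $g_i \cdot U_{i_0} = U_i$ (with $g_{i_0} = e$). The standard orbit-stabilizer argument shows that $\{g_i\}_{i \in I}$ is then a complete set of left coset representatives for $G_0$ in $G$: if $g_i G_0 = g_j G_0$, then $g_j^{-1} g_i \in G_0$ fixes $U_{i_0}$, hence $U_i = g_i U_{i_0} = g_j U_{i_0} = U_j$, so $i = j$ since the family $(U_i)$ is indexed without repetition.

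With this choice fixed, I would use the decomposition $R[G] = \bigoplus_{i \in I} g_i \cdot R[G_0]$ of right $R[G_0]$-modules to obtain
\[
\Ind_{G_0}^G U_{i_0} = R[G] \otimes_{R[G_0]} U_{i_0} = \bigoplus_{i \in I} g_i \otimes U_{i_0}.
\]
I would then define the candidate isomorphism $\phi : \Ind_{G_0}^G U_{i_0} \to V$ by setting $\phi(g \otimes u) = g \cdot u$ and extending $R$-linearly. Well-definedness on the tensor product is the balanced condition $\phi(gh \otimes u) = gh \cdot u = g \cdot (h \cdot u) = \phi(g \otimes h u)$ for $h \in G_0$, and $R[G]$-linearity is then automatic.

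Surjectivity and injectivity both reduce cleanly to the decomposition $V = \bigoplus_i U_i$. For surjectivity: any $v \in V$ writes uniquely as $v = \sum_i v_i$ with $v_i \in U_i$; since $g_i : U_{i_0} \to U_i$ is an $R$-module isomorphism (with inverse induced by $g_i^{-1}$), there is a unique $u_i \in U_{i_0}$ with $v_i = g_i \cdot u_i$, so $v = \phi\bigl(\sum_i g_i \otimes u_i\bigr)$. For injectivity: $\phi$ sends the summand $g_i \otimes U_{i_0}$ into $U_i$, and since the $U_i$ are in direct sum in $V$ while $g_i$ acts injectively on $U_{i_0}$, the restriction of $\phi$ to each summand is injective and the summands have pairwise trivial intersection in $V$. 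The only mild obstacle is the bookkeeping of showing that $\phi$ does not depend on the particular choice of representatives $g_i$, which follows from the balanced tensor relation applied to any two choices $g_i$ and $g_i h$ with $h \in G_0$; this also shows that the resulting module $\Ind_{G_0}^G U_{i_0}$, up to canonical isomorphism, does not depend on the choice of $i_0$ within its $G$-orbit.
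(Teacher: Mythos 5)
Your proof is correct, but note that the paper does not actually prove this proposition: its ``proof'' is the single citation to \cite[\S 50.2]{CR_Repr}, and a subsequent remark points out that the reference works over a field while the proposition is needed over a commutative ring. Your self-contained argument is the standard one (essentially what Curtis--Reiner do), carried out over a general $R$, so it in fact substantiates that remark. Two small points you gloss over: first, you only verify that distinct indices give \emph{distinct} cosets $g_iG_0$; you should also note \emph{completeness}, i.e.\ that every $g\in G$ lies in some $g_iG_0$ --- this holds because $g\cdot U_{i_0}=U_j$ for some $j$ (condition (2) of the definition), whence $g_j^{-1}g\in G_0$. Second, the claim that the family is ``indexed without repetition'' is not an extra hypothesis but follows from the directness of the sum $V=\bigoplus_i U_i$ when the $U_i$ are nonzero (and the case where all $U_i=0$ is trivial, since by transitivity either all or none vanish). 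Finally, your closing worry about independence of the choice of representatives is vacuous: the map $\phi(g\otimes u)=g\cdot u$ is defined without any reference to the $g_i$, which are used only to exhibit the direct-sum decomposition of $R[G]\otimes_{R[G_0]}U_{i_0}$ needed for injectivity and surjectivity; this is also exactly what makes the isomorphism ``canonical'' as asserted in the statement.
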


\begin{proof}
    \cite[§50.2]{CR_Repr}.
\end{proof}

\begin{rem}
    Although \cite{CR_Repr} treats the concept of (systems of) imprimitivity in terms of modules over group rings over a \emph{field}, \cref{defn:system_of_imprimitivity} and \cref{pro:imprimitive_modules_are_induced} are also valid for group rings over a commutative ring.

    Moreover, we decided to change the notation for induced modules in the reference in favour of the more suggestive notation $\Ind_H^G$.
\end{rem}

\subsection{Monomial representation}
In 2015, Dehornoy \cite{Dehornoy_RC} developed a calculus of words to study structure groups of cycle sets and their Garside structure. In particular, given a cycle set $X$, he deduced the existence of a monomial representation of its structure group
$\Theta: G(X)\to M_X(\C(q))$, where $M_X(\C(q))$ is the ring of matrices with entries in $\C(q)$ indexed by $X\times X$. Moreover, he shows that this representation descends to a representation of the Coxeter-like group $\cox{G}(X)$ when specializing $q$ to a $d$-th root of unity $\zeta_{d}$, where $d$ is the Dehornoy class of $X$.

Given a permutation $\sigma \in \Sym_X$, we write $P_{\sigma} = (P_{ij})_{i,j \in X}$ for its \emph{permutation matrix} whose entries are given by
\[
P_{ij} = \begin{cases}
    1 & i = \sigma(j) \\
    0 & i \neq \sigma(j).
\end{cases}
\]
This matrix acts on a basis vector $e_i$ with $i \in X$ as $P_{\sigma}(e_i) = e_{\sigma(i)}$. In $M_X(\C(q))$ for $x \in X$, denote by $D_x$ the diagonal matrix
$\text{diag}(1,\dots,1,q,1,\dots,1)$ with a $q$ on the $x$-coordinate. With this notation, we define for $x \in X$ the permutation matrix $P_x = P_{\lambda_x}$ and, more generally, $P_g = P_{\lambda_g}$ for $g \in \G(X)$. Moreover, expressing $g\in G(X)$ in the brace structure as $g=\sum\limits_{x\in X}g_x x$, 
for some $g_x\in \in \Z$,
we define the diagonal matrix $D_g=\prod_{x\in X} D_x^{g_x}$.

\begin{thm}[\cite{Dehornoy_RC}, Proposition 5.13] \label{thm:dehornoy_monomial_representation}
Let $X$ be a cycle set of Dehornoy class $d$.
The map $X\to M_X(\C(q))$ defined as $x\mapsto D_xP_x$ extends to a faithful representation 
$$\Theta\colon G(X)\to M_X(\C(q))$$
and the specialization at $q= \zeta_{d}$ yields a faithful representation 
\begin{align*}
    \cox{\Theta} \colon \cox G(X)=G(X)/dG(X) & \to M_X(\C) \\
    g \cdot dG(X) & \mapsto \Theta(g)_{q = \zeta_{d}}.
\end{align*}
Furthermore, the matrix $\Theta(g)$ can be uniquely written as the product of a diagonal matrix and a permutation matrix:
\begin{equation} \label{eq:theta_in_terms_of_g}
\Theta(g) =D_gP_g.
\end{equation}
\end{thm}

\begin{defn}
    The representations $\Theta$ and $\cox{\Theta}$ are called the \emph{monomial representations} of $G(X)$ and $\cox G(X)$, respectively.
\end{defn}

We now extend the above construction to arbitrary roots of unity. Although this generalization is not explicitly stated in \cite{Dehornoy_RC}, it follows by adapting the same arguments.

\begin{pro} \label{prop:monomial_representation_general_l}
Let $d$ be the Dehornoy class of $X$. Then, for any integer $l \geq 1$, specializing at $q=\zeta_{ld}=\exp\left(\frac{2\pi i}{ld}\right)$ yields a faithful representation
\begin{align*}
    \cox{\Theta}_l \colon \cox G_l(X)=G(X)/ldG(X) & \to M_X(\C), \\
    g \cdot ldG(X) & \mapsto \Theta(g)_{q = \zeta_{ld}}.
\end{align*}
\end{pro}

\begin{proof}
We follow the argument of \cite[Proposition 5.13]{Dehornoy_RC}, now for general $l$.
Every matrix $\Theta(g)$ decomposes uniquely as
\[
\Theta(g) = D_g P_g,
\]
where $D_g$ is diagonal and $P_g$ is a permutation matrix.  
Writing $g = \sum_{x \in X} g_x x$, the $x$-th diagonal entry of $D_g$ is $q^{g_x}$,
which specializes to $\zeta_{ld}^{\, g_x}$ at $q = \zeta_{ld}$.  
Hence an element $g = \sum_{x \in X} g_x x\in G(X)$ lies in the kernel of the map
\[
\theta: G(X)\to  M_X(\C),\quad g \mapsto \Theta(g)_{q=\zeta_{ld}}
\]
if and only if $D_g=I$ and $P_g=I$ after specialization at $q=\zeta_{ld}$.

Now $D_g=I$ at $q=\zeta_{ld}$ if and only if
all coefficients $g_x$ are divisible by $ld$, that is, $g\in ldG(X)$.
Moreover $P_g=I$  at $q=\zeta_{ld}$ if and only if $g\in \Soc(G(X))$.

Therefore $g\in \ker \theta$ if and only if $g\in ldG(X)\cap \Soc(G(X))=ldG(X)$,
Since, by definition of the Dehornoy class, $ldG(X)\subseteq \Soc(G(X))$, 
it follows that $\ker\theta$ is exactly $ldG(X)$.
Consequently, the induced map 
$\cox{\Theta}_l\colon \cox G_l(X) \to M_X(\C)$ 
is well-defined and injective.
\end{proof}

\section{Irreducibility} \label{sec:irreducibility}
Let $X$ be a cycle set of size $n$ and Dehornoy class $d$, with permutation group $\G$ and structure group $G$.  Recall that $\sigma_x = \lambda_x^{-1}$, and that $|\G|$ divides $d^n$ (\cref{primes dividing d and n}).

As $G$ is a brace and $(G,+)$ is generated by $X$,
we can express any $g$ in $G$ as $g=\sum_{x\in X} g_x x$ with $g_x \in \Z$ for $x \in X$.

\begin{pro}\label{pro:irr_implies_ind} 
If $X$ is a decomposable cycle set, the representations $\Theta$ and $\overline\Theta_l$ are reducible.
\end{pro}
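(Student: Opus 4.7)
The plan is to build proper invariant subspaces of the representation directly from a decomposition $X = X_1 \sqcup X_2$. The key point to set up is that both $X_i$ are $\lambda_x$-invariant for every $x \in X$. This is already provided by the characterisation recalled after \cref{exa:cyclic_cycle_set}, namely that $X$ is decomposable if and only if $\G(X)$ does not act transitively on $X$; we may therefore choose the $X_i$ to be unions of $\G$-orbits, so that $\lambda_x(X_i)\subseteq X_i$ for all $x\in X$.

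Working in the ambient free module $V=\bigoplus_{y\in X}\C(q)\,e_y$ on which $\Theta$ acts, I would set
\[
V_i=\bigoplus_{y\in X_i}\C(q)\,e_y,\qquad i=1,2,
\]
which are proper nonzero $\C(q)$-submodules with $V=V_1\oplus V_2$. It then suffices to check that each $V_i$ is stable under the generating matrices $\Theta(x)=D_xP_x$ ($x\in X$). Since $D_x$ is diagonal in the standard basis, for any $y\in X_i$,
\[
\Theta(x)(e_y)=D_x\,e_{\lambda_x(y)}=q^{\delta_{x,\,\lambda_x(y)}}\,e_{\lambda_x(y)}\in V_i,
\]
using $\lambda_x(y)\in X_i$. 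Because $G$ is generated multiplicatively by $X$, this propagates to $\Theta(g)V_i\subseteq V_i$ for every $g\in G$, so $\Theta$ is reducible.

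Finally, the decomposition $V=V_1\oplus V_2$ is a pure splitting of the indexing set $X$, independent of $q$. Specialising $q=\zeta_{ld}$ in the formula $\Theta(x)=D_xP_x$ therefore yields the same block decomposition of $\C^X$ into $\overline{\Theta}_l(G)$-stable submodules, and $\overline{\Theta}_l$ is reducible as well. There is no genuine obstacle in the argument; the only nontrivial input is the already-quoted equivalence between indecomposability of $X$ and transitivity of the action of $\G(X)$, and the essentially tautological fact that the monomial matrices $D_xP_x$ preserve the coordinate subspaces indexed by any $\lambda$-invariant subset of $X$.
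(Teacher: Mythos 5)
Your proof is correct and follows essentially the same route as the paper: the decomposition $X=X_1\sqcup X_2$ yields coordinate subspaces of $\C(q)^X$ (resp.\ $\C^X$) that are stabilized by the monomial action, and the same splitting works after specializing $q=\zeta_{ld}$. The only difference is that you spell out the $\lambda$-invariance of the $X_i$ via the orbit characterization of decomposability, a detail the paper leaves implicit.
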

\begin{proof}
If $X$ is decomposable, say $X=X_1\sqcup X_2$, then the action of $G$ stabilizes the proper subspaces spanned by $X_1$ and $X_2$ in $\C(q)^X$, thus $\Theta$ is reducible. The proof for $\cox{\Theta}_l$ is exactly the same.
\end{proof}

In the following, we will prove that $\cox{\Theta}$ (i.e. $\cox{\Theta}_1$) is irreducible for most indecomposable cycle sets $X$.

\begin{thm} \label{thm:irreducibility_of_theta_bar}
    Let $X$ be an indecomposable cycle set of size $n$ and Dehornoy class $d$. $\cox{\Theta}$ is irreducible if one of the following conditions is satisfied:
    \begin{enumerate}
        \item $d > 2$,
        \item $d = 2$ and $|\G(X)| < 2^{\frac{n}{2}}$.
    \end{enumerate}
\end{thm}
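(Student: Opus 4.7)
The plan is to reduce irreducibility of $\cox{\Theta}$ to a character-norm computation and then treat the two hypotheses separately. Writing $\chi_{\cox{\Theta}}(g)=\sum_{y\in\Fix(\lambda_g)}\zeta_d^{g_y}$ and expanding $\langle\chi_{\cox{\Theta}},\chi_{\cox{\Theta}}\rangle$,
\[
\langle\chi_{\cox{\Theta}},\chi_{\cox{\Theta}}\rangle=\frac{1}{d^n}\sum_{y_1,y_2\in X}\sum_{g\in H_{y_1,y_2}}\zeta_d^{g_{y_1}-g_{y_2}},
\]
where $H_{y_1,y_2}=\mathrm{Stab}_{\cox{G}}(y_1)\cap\mathrm{Stab}_{\cox{G}}(y_2)$. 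Using $g\circ h=g+\lambda_g(h)$ and the fact that $\lambda_g$ acts on $(\cox{G},+)=(\Z/d\Z)^X$ as the coordinate permutation of $\lambda_g\in\Sym_X$, I would first check that $\phi_{y_1,y_2}\colon g\mapsto g_{y_1}-g_{y_2}$ is a group homomorphism $H_{y_1,y_2}\to\Z/d\Z$. The inner sum is then $|H_{y_1,y_2}|$ when $\phi_{y_1,y_2}$ is trivial and vanishes otherwise; the diagonal terms $y_1=y_2$ total $d^n$ by indecomposability, so irreducibility becomes equivalent to the nontriviality of $\phi_{y_1,y_2}$ for every $y_1\neq y_2$.

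The restriction $\phi_{y_1,y_2}|_S$, for $S=\Soc(\cox{G})$, is nontrivial precisely when $\epsilon_{y_1}-\epsilon_{y_2}\notin S^\perp$ (taken for the standard pairing on $(\Z/d\Z)^X$), so only the \emph{bad} pairs $\epsilon_{y_1}-\epsilon_{y_2}\in S^\perp$ require further work. The short exact sequence $0\to S\to\cox{G}\to\G\to 0$ identifies $S^\perp$ canonically with $\widehat{(\G,+)}$ via $\chi\mapsto v_\chi$, $v_\chi(x)=\chi(\lambda_x)$; bad pairs thus correspond to additive characters $\chi$ of $(\G,+)$ with $\chi(\lambda_{y_1})=1$, $\chi(\lambda_{y_2})=-1$ and $\chi(\lambda_x)=0$ for $x\neq y_1,y_2$. ``Bad-or-equal'' is a $\G$-invariant equivalence $\approx$ on $X$, so by indecomposability all $\approx$-classes share a common cardinality $c\geq 1$.

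For hypothesis (2), $d=2$ with $|\G|<2^{n/2}$, I would conclude by a rank count in $\F_2^X$: a class $C$ of size $c$ contributes to $S^\perp$ the $(c-1)$-dimensional subspace spanned by the weight-two vectors $\epsilon_y+\epsilon_{y'}$ ($y,y'\in C$), and contributions from distinct classes are independent since their supports are disjoint. Hence
\[
\log_2|\G|=\dim_{\F_2}S^\perp\geq\sum_C(c-1)=\frac{n(c-1)}{c}\geq\frac{n}{2}
\]
whenever $c\geq 2$, contradicting the hypothesis; no bad pair exists, and irreducibility follows.

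For hypothesis (1), $d>2$, the analogous rank bound only produces $|\G|\geq d^{n/2}$ and does not exclude bad pairs, so one must deal with them directly. For such a pair with associated character $\chi$, I would exploit the identity
\[
\phi_{y_1,y_2}(g)=\langle\epsilon_{y_1}-\epsilon_{y_2},g\rangle=\chi(\psi(g))=\chi(\lambda_g),
\]
valid for all $g\in\cox{G}$ (with $\psi\colon\cox{G}\to\G$ the canonical brace map), reducing the claim to exhibiting $\sigma\in\mathrm{Stab}_\G(y_1,y_2)$ with $\chi(\sigma)\neq 0$. My plan is to build such $\sigma$ from $\lambda_{y_1},\lambda_{y_2}^{-1}$ and a judicious word in $\{\lambda_x:x\neq y_1,y_2\}$, using the brace identities $\lambda_a(\lambda_x)=\lambda_{\lambda_a(x)}$ and $\lambda_x^{-1}=-\lambda_{\Sq(x)}$ in $(\G,+)$ to expand the result additively and force $\chi(\sigma)=\pm 2\pmod d$, nonzero precisely because $d>2$. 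This final construction---ensuring simultaneously that $\sigma\in\mathrm{Stab}_\G(y_1,y_2)$ and that its additive expansion carries an unbalanced $\lambda_{y_1}$-vs.-$\lambda_{y_2}$ contribution---is the main technical obstacle I anticipate.
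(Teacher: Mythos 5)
Your reduction of irreducibility to the statement that $\phi_{y_1,y_2}\colon g\mapsto g_{y_1}-g_{y_2}$ must be nontrivial on the pair-stabilizer $H_{y_1,y_2}$ for every $y_1\neq y_2$ is correct, and your treatment of case (2) is sound. It is essentially a dual, character-theoretic reformulation of what the paper does: the paper introduces the $\G$-invariant equivalence relation ``$s_{y_1}=s_{y_2}$ for all $s$ in (a Sylow subgroup of) the socle'', notes that by transitivity all classes share a common size $c$, and compares $|\G|$ with the count forced by $c\geq 2$; the paper then concludes not via $\langle\chi,\chi\rangle=1$ but by a minimal-support argument showing that any invariant subspace contains some $e_x$ and hence, by transitivity, everything. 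The combinatorial core of your case (2) coincides with the paper's.

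Case (1), however --- the main content of the theorem --- has a genuine gap. You abandon the counting argument because the crude bound $|\G|\geq d^{n/2}$ yields no contradiction, and replace it by a direct construction of some $\sigma\in\G$ fixing $y_1$ and $y_2$ with $\chi(\sigma)\neq 0$, which you yourself flag as ``the main technical obstacle'': no such construction is given, and it is not clear how a word in the $\lambda_x$ could simultaneously be forced to stabilize both points and to carry an unbalanced $\lambda_{y_1}$-versus-$\lambda_{y_2}$ contribution. The missing idea is an \emph{upper} bound on $|\G|$ coming from $\G\leq\Sym_X$: by Legendre's formula, $v_p(|\G|)\leq v_p(n!)=\frac{n-\DS_p(n)}{p-1}\leq\frac{n-1}{p-1}$ for every prime $p$. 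If $d>2$, one may choose $p\mid d$ with either $p>2$ or $p=2$ and $v:=v_2(d)\geq 2$; running your counting argument on the $p$-primary part of the socle gives $v_p(|\G|)\geq\frac{vn}{2}$ as soon as some class has size at least $2$, which contradicts $\frac{n-1}{p-1}<\frac{n}{2}\leq\frac{vn}{2}$ when $p>2$, respectively $n-1<\frac{vn}{2}$ when $p=2$ and $v\geq 2$. Hence bad pairs simply do not exist for $d>2$ and no direct construction is needed. With this ingredient your framework closes; without it, case (1) remains unproven.
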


We provide some machinery first.

Let $p$ be a prime and $n > 0$. We can always uniquely factorize $n = p^v m$ with $v, m \geq 0$ and $p \nmid m$. Therefore, one can define the \emph{$p$-valuation} $v_p(n)$ as the exponent $v$ in such a factorization.

On the other hand, there is a unique $p$-adic representation $n = \sum_{i = 0}^{\infty} a_ip^i$ with $0 \leq a_i < p$ for all $i \geq 0$. The \emph{$p$-adic digit sum} of $n$ is defined as $\DS_p(n) = \sum_{i=0}^{\infty} a_i$.

We will need the following elementary result about the $p$-valuation of factorials:

\begin{lem} \label{lem:p_valuation_of_factorials}
    For all $n \geq 0$, we have $v_p(n!) = \frac{n - \DS_p(n)}{p-1}$.
\end{lem}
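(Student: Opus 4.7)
The plan is to start from Legendre's classical formula
\[
v_p(n!)=\sum_{k\geq 1}\left\lfloor\frac{n}{p^k}\right\rfloor,
\]
expand $n$ in its base-$p$ representation, and rearrange the resulting double sum. Legendre's formula itself comes from the observation that each integer $m\in\{1,\dots,n\}$ contributes $v_p(m)$ to $v_p(n!)$, and $v_p(m)=\#\{k\geq 1: p^k\mid m\}$, so summing over $m$ and interchanging the sums counts, for each $k\geq 1$, the multiples of $p^k$ in $\{1,\dots,n\}$, namely $\lfloor n/p^k\rfloor$.

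Granting Legendre's formula, write $n=\sum_{i\geq 0}a_i p^i$ with $0\leq a_i<p$. For each $k\geq 1$ one has $\lfloor n/p^k\rfloor=\sum_{i\geq k}a_i p^{i-k}$, since the digits of index $<k$ vanish after flooring. Interchanging the order of summation and summing the geometric series yields
\[
v_p(n!)=\sum_{k\geq 1}\sum_{i\geq k}a_i p^{i-k}=\sum_{i\geq 1}a_i\sum_{k=1}^{i}p^{i-k}=\sum_{i\geq 1}a_i\cdot\frac{p^i-1}{p-1}.
\]
Since $a_0(p^0-1)=0$, the sum can be extended to $i\geq 0$, giving
\[
v_p(n!)=\frac{1}{p-1}\sum_{i\geq 0}a_i(p^i-1)=\frac{1}{p-1}\bigl(n-\DS_p(n)\bigr),
\]
as desired.

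There is no real obstacle here: once Legendre's formula is accepted, the argument is a one-line manipulation of double sums and a geometric series. If one prefers to avoid invoking Legendre, the identity can instead be proven by induction on $n$ using the multiplicative splitting $v_p(n!)=v_p((n-1)!)+v_p(n)$, together with the elementary fact that if $v=v_p(n)$, then subtracting $1$ from $n$ triggers a base-$p$ "borrow" across the lowest $v$ digits, yielding $\DS_p(n-1)=\DS_p(n)-1+v(p-1)$; substituting into the induction hypothesis closes the step.
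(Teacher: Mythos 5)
Your proof is correct. The paper does not actually prove this lemma --- it simply cites \cite[Lemma 4.2.8]{Cohen_NT_V1} --- so your self-contained derivation via Legendre's formula $v_p(n!)=\sum_{k\geq 1}\lfloor n/p^k\rfloor$, followed by the digit expansion and the geometric-series rearrangement, is exactly the standard argument one would find behind that citation; all the steps check out, including the justification that $\lfloor n/p^k\rfloor=\sum_{i\geq k}a_ip^{i-k}$ and the harmless extension of the sum to $i=0$. The alternative induction you sketch, using $\DS_p(n-1)=\DS_p(n)-1+v_p(n)(p-1)$ from the base-$p$ borrow, is also valid and closes the inductive step correctly.
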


\begin{proof}
\cite[Lemma 4.2.8.]{Cohen_NT_V1}.
\end{proof}

The following estimate for the $p$-valuation of $|\G(X)|$ is now immediate:

\begin{lem} \label{lem:estimate_size_of_sylows}
    Let $X$ be a cycle set of size $n$. Then, $v_p(|\G(X)|) \leq \frac{n-1}{p-1}$ for any prime $p$.
\end{lem}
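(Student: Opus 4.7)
The plan is very short: combine Lagrange's theorem for the inclusion $\G(X) \leq \Sym_X$ with the formula for $v_p(n!)$ provided by \cref{lem:p_valuation_of_factorials}.

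First, I would observe that by the very definition of $\G(X)$ as a subgroup of $\Sym_X$, Lagrange's theorem gives that $|\G(X)|$ divides $n!$. Consequently $v_p(|\G(X)|) \leq v_p(n!)$ for every prime $p$. Applying \cref{lem:p_valuation_of_factorials} then yields
\[
v_p(|\G(X)|) \leq v_p(n!) = \frac{n - \DS_p(n)}{p-1}.
\]

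To conclude, I would note that since $X$ is non-empty we have $n \geq 1$, so the $p$-adic expansion of $n$ contains at least one non-zero digit, which means $\DS_p(n) \geq 1$. Substituting this into the inequality above gives exactly
\[
v_p(|\G(X)|) \leq \frac{n-1}{p-1},
\]
as desired.

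There is no real obstacle here: the proof is a one-line application of Lagrange combined with the previous lemma. The only subtlety worth mentioning is that the bound $\DS_p(n) \geq 1$ requires $n \geq 1$, which is automatic since cycle sets are non-empty by definition.
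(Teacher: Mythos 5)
Your proof is correct and follows exactly the same route as the paper: Lagrange's theorem for $\G(X) \leq \Sym_X$, then \cref{lem:p_valuation_of_factorials} together with the observation that $\DS_p(n) \geq 1$ for $n \geq 1$. Your explicit justification of the last inequality is a small bonus over the paper's version, which leaves it implicit.
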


\begin{proof}
    As $\G(X)$ is a subgroup of $\Sym_X$, the order $|\G(X)|$ divides $|\Sym_X| = n!$ and therefore,
    \[
    v_p(|\G(X)|) \leq v_p(n!) = \frac{n-\mathrm{DS}_p(n)}{p-1} \leq \frac{n-1}{p-1}.
    \qedhere\]
\end{proof}

Denote the set of invertible diagonal $n \times n$-matrices over a field $K$ by $\mathcal{D}_n(K)$. We will use the following general lemma:

\begin{lem} \label{lem:diagonal_actions}
    Let $n > 0$, $G$ be a group and let $\rho: G \to \mathcal{D}_n(K)$ be a representation. Let $\rho(g) =\mathrm{diag}(d_{1,g},\ldots,d_{n,g})$ and suppose that for any $1 \leq i < j \leq n$, there is a $g \in G$ such that $d_{i,g} \neq d_{j,g}$. Then every $G$-invariant subspace $0 \neq U \leq K^n$ contains some unit vector $e_i$.
\end{lem}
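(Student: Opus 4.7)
The plan is the standard \emph{minimal support} argument for diagonal actions. Given a nonzero $G$-invariant subspace $U \leq K^n$, pick a nonzero $v \in U$ whose support
\[
\supp(v) = \{i : v_i \neq 0\}
\]
is of minimal cardinality among all nonzero elements of $U$. I would then claim that $|\supp(v)| = 1$, from which it follows immediately that $v$ is a scalar multiple of some $e_i$, hence $e_i \in U$.

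To prove the claim, I would argue by contradiction: suppose $|\supp(v)| \geq 2$ and pick two distinct indices $i, j \in \supp(v)$. By the hypothesis on $\rho$, there exists $g \in G$ with $d_{i,g} \neq d_{j,g}$. Consider the vector
\[
w = \rho(g)v - d_{j,g}\, v \in U.
\]
The $k$-th coordinate of $w$ is $(d_{k,g} - d_{j,g})v_k$, so in particular the $j$-th coordinate vanishes, giving $\supp(w) \subseteq \supp(v) \setminus \{j\}$. On the other hand, the $i$-th coordinate equals $(d_{i,g} - d_{j,g})v_i$, which is nonzero by the choice of $g$ and the assumption $i \in \supp(v)$. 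Hence $w$ is a nonzero element of $U$ whose support is strictly contained in that of $v$, contradicting minimality.

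I do not foresee a significant obstacle: the argument only uses that $U$ is closed under addition, scalar multiplication and the $G$-action, and that $K$ is a field so that subtraction is available. The finiteness of $n$ ensures that a minimum-cardinality support exists. The hypothesis on pairwise distinct characters is used exactly once, and only in its stated form (separating two indices at a time, not simultaneously), which is precisely what the minimal-support peeling procedure requires.
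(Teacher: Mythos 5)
Your proof is correct and is essentially identical to the paper's: both take a nonzero vector of minimal support in $U$, and if the support has at least two elements, use a separating $g$ to form $w = \rho(g)v - d_{j,g}v$, which is nonzero with strictly smaller support, a contradiction. Nothing to add.
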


\begin{proof}
    Let $0 \neq U \subseteq K^n$ be $G$-invariant and pick $0 \neq v \in U$ whose \emph{support} $\supp(v) = \{ i \in \{1,\ldots,n \} : v_i \neq 0 \}$ is as small as possible. If $|\supp(v)| = 1$, then $v = a e_i$ for some $0 \neq a \in K$ and some index $i$, and the claim is proven. Suppose that $|\supp(v)| > 1$ and choose indices $i < j$ with $i,j \in \supp(v)$. By assumption, there is a $g \in G$ such that $d_{i,g} \neq d_{j,g}$. Consider the vector $w = \rho(g)(v) - d_{j,g}v \in U$. As $\rho$ acts by diagonal matrices, it is immediate that $\supp(w) \subseteq \supp(v)$.
    For this vector, we observe:
    \begin{align*}
    w_i & = d_{i,g}v_i - d_{j,g}v_i = (d_{i,g} -d_{j,g})v_i \neq 0 ; \\ 
    w_j & = d_{j,g}v_j - d_{j,g}v_j = 0.
    \end{align*}
    The first calculation allows us to conclude that $w \neq 0$. The other calculation shows that $j \not\in \supp(w)$ which implies $|\supp(w)| \leq |\supp(v)| -1$. But this contradicts the assumption that the support of $v$ is as small as possible among the nonzero vectors in $U$.
\end{proof}

\begin{lem}\label{ei} Let $X$ be an indecomposable cycle set. Furthermore, let $U$ be an invariant subspace of $\C^X$ under $\cox{\Theta}_l$ (resp. an invariant subspace of $\C(q)^X$ under $\Theta$). If $e_x$ is in $U$ for some $x \in X$, then $U =\mathbb C^X$ (resp. $U = \C(q)^X$).
\end{lem}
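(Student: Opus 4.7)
The plan is to exploit the transitivity of the $\mathcal{G}(X)$-action on $X$ (which follows from indecomposability by \cite[Proposition 2.12]{ESS_group}, as recalled in the preliminaries) together with the explicit monomial form $\Theta(g) = D_g P_g$ of \cref{eq:theta_in_terms_of_g}.

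First I would compute the action of a group element on a basis vector. For $g \in G(X)$, since $D_g$ is diagonal with $y$-th entry $q^{g_y}$, we get
\[
\Theta(g)(e_x) = D_g P_g(e_x) = D_g(e_{\lambda_g(x)}) = q^{g_{\lambda_g(x)}}\, e_{\lambda_g(x)}.
\]
As $q$ (resp.\ the root of unity $\zeta_{ld}$) is invertible, the scalar factor is nonzero, so whenever $e_x \in U$ and $g \in G$ we have $e_{\lambda_g(x)} \in U$.

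Next I would use indecomposability. Since $X$ is indecomposable, $\mathcal{G}(X)$ acts transitively on $X$; composing with the surjective brace homomorphism $G(X) \twoheadrightarrow \mathcal{G}(X)$ given by $x \mapsto \lambda_x$ (which factors through $\overline{G}_l(X)$ because $ldG(X) \subseteq \Soc(G(X))$), the induced $G(X)$-action (resp.\ $\overline{G}_l(X)$-action) on $X$ is transitive as well. Hence for any $y \in X$, there exists $g$ in the acting group with $\lambda_g(x) = y$, and by the previous step $e_y \in U$.

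Therefore $U$ contains every basis vector $e_y$ with $y \in X$, so $U$ equals the ambient space $\mathbb{C}^X$ (resp.\ $\mathbb{C}(q)^X$). No serious obstacle is expected here; the only subtlety worth spelling out is that the same argument works uniformly for $\Theta$ and $\overline{\Theta}_l$, since in both cases the diagonal scalar appearing above is a nonzero element of the coefficient field.
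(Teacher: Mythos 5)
Your proposal is correct and follows essentially the same route as the paper: apply $\Theta(g)$ (resp.\ $\cox{\Theta}_l(g)$) to $e_x$, observe via \cref{eq:theta_in_terms_of_g} that the result is a nonzero scalar multiple of $e_{\lambda_g(x)}$, and use transitivity of the action coming from indecomposability to reach every basis vector. Your extra remarks (the explicit scalar $q^{g_{\lambda_g(x)}}$ and the factorization through $\cox{G}_l$) are just a more detailed write-up of the same argument.
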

\begin{proof}
We will only consider the representation $\Theta$ as the proof is similar for $\cox{\Theta}_l$. By indecomposability, for all $e_y$, there exists $g\in G$ such that $P_g e_x=e_y$, therefore an application of \cref{eq:theta_in_terms_of_g} shows that $\Theta(g)e_x=D_g P_g e_x=D_g e_y \in U$. As $D_g$ is diagonal, $\Theta(g)e_x$ is a scalar multiple of $e_y$, so $e_y\in U$. It follows that $U = \C(q)^X$.
\end{proof}

We can now proceed with the proof of the main theorem of this section:
\begin{proof}[Proof of \cref{thm:irreducibility_of_theta_bar}]
    Let $p$ be a prime dividing $d$ and write $d = m p^v$ with $p \nmid m$.
    
    Write $\cox{G}^{(p)}$ for the (additive) $p$-Sylow subgroup of $\cox{G}$ and consider the $p$-Sylow subgroup of the socle, $S^{(p)} = \Soc(\cox{G})^{(p)} \subseteq \cox{G}^{(p)}$. Then $\cox{\Theta}$ restricts to a faithful diagonal representation $\cox{\Theta}|_{S^{(p)}}: S^{(p)} \to \mathcal{D}_X(\C)$, such that the diagonal matrices in the image have $p^v$-th roots of unity on the diagonal.

    As $(\cox{G},+) = \Z_d^X$, it follows that $\cox{G}^{(p)} = m\cox{G}$, so $(\cox{G}^{(p)},+) \cong \Z_{p^v}^X$ and we can express each element $g \in \cox{G}^{(p)}$ uniquely as
    \[
    g = \sum_{\overline{x} \in \overline{X}} g_{m\overline{x}} m\overline{x},
    \]
    with $ \ 0 \leq g_{m\overline{x}} < p^v$
    and where $\overline{x}$ is the class of $x$ in $\cox{G}$, for all $x \in X$
    and $\overline{X}=\{\overline{x}\mid x\in X\}$.
    Observe that for $h \in \cox{G}$ and $g \in \cox{G}^{(p)}$, we have
    \[
    \lambda_h(g) = \sum_{\overline{x} \in \overline{X}} g_{m\overline{x}} m \lambda_h(\overline{x}) = \sum_{x \in X} g_{m\lambda_h^{-1}(\overline{x})} m\overline{x},
    \]
    which shows
    \begin{equation} \label{eq:equivariance_of_coordinates}
        (\lambda_h(g))_{m\overline{x}} = g_{m\lambda_h^{-1}(\overline{x})}
    \end{equation}
    
    Using this notation for $s\in S^{(p)}$, we define on $X$ the equivalence relation:
    \[
    x \sim y \Leftrightarrow \forall s \in S^{(p)}: s_{m\overline{x}} = s_{m\overline{y}}.
    \]
    If $x \sim y$ and $g \in \cox{G}$, then for all $s \in S^{(p)}$, we also have
    \[
    s_{m\lambda_g(\overline{x})} = (\lambda_g^{-1}(s))_{m\overline{x}} = (\lambda_g^{-1}(s))_{m\overline{y}} = s_{m\lambda_g(\overline{y})}.
    \]
   
    Here we use \cref{eq:equivariance_of_coordinates} and the fact that $S^{(p)}$ is a left ideal in $\cox{G}$: as $\Soc(\cox{G})$ is an ideal in $\cox{G}$ and $S^{(p)}$, its unique $p$-Sylow subgroup, is characteristic therein, each $\lambda_g$ for $g \in \cox{G}$ has to restrict to an automorphism of $\Soc(\cox{G})$ that leaves $S^{(p)}$ invariant.
    
    Therefore, the equivalence relation $\sim$ is $\cox{G}$-invariant and the classes in $X/\!\!\sim$ are blocks of imprimitivity for the action of $\cox{G}$ on $X$. In particular, all blocks have the same size. Write $\mathcal{B}_x = \{y \in X: x \sim y \}$ for $x \in X$.

    If $\sim$ is not a trivial equivalence relation, then $|\mathcal{B}_x| \geq 2$ for all $x \in X$ which shows that the number of blocks is $|X/\!\!\sim \!\!| \leq \frac{n}{2}$. As by definition, the coordinates $s_{m\overline{x}}$ are blockwise constant for $s \in S^{(p)}$, and can take $p^v$ different values, we conclude that $|S^{(p)}| \leq (p^v)^{\frac{n}{2}}$ and thus, $v_p(|S^{(p)}|) \leq \frac{vn}{2}$. This implies that, if $\sim$ is nontrivial,
    \[
    v_p(|\G^{(p)}|) = v_p \left( \frac{|\cox{G}^{(p)}|}{|S^{(p)}|} \right) = vn - \frac{vn}{2} \geq \frac{vn}{2}.
    \]
    If $p > 2$, then, by \cref{lem:estimate_size_of_sylows}, we can estimate 
    \[
    v_p(|\G^{(p)}|) \leq \frac{n-1}{p-1} < \frac{n}{2} \leq \frac{vn}{2}.
    \]
    In this case, $\sim$ is a trivial equivalence relation. If $p = 2$ and $v > 1$, we use the same lemma to conclude that
    \[
    v_2(|\G^{(2)}|) \leq n-1 < \frac{vn}{2}
    \]
    which again implies that $\sim$ is trivial. We conclude that if $d > 2$, we can always find a prime $p|d$ for which the equivalence relation $\sim$ is trivial.
    
    Finally, let $d=2$, i.e. $p=2$ and $v=1$, and suppose that $|\G| < 2^{\frac{n}{2}}$, then also $|\G^{(2)}| < 2^{\frac{n}{2}}$, therefore
    \[
    v_2(|\G^{(2)}|) < \frac{n}{2} = \frac{vn}{2},
    \]
    and $\sim$ is trivial.

    We have therefore proven that in both cases considered in the theorem, we can find a prime $p|d$ such that the associated equivalence relation $\sim$ is trivial on $X$.
    
    For $s \in S^{(p)}$, \cref{eq:theta_in_terms_of_g} implies that $\cox{\Theta}(s)$ is a diagonal matrix with entries $d_{x,s} = \zeta_d^{ms_{m\overline{x}}} = \zeta_{p^v}^{s_{m\overline{x}}}$, therefore triviality of $\sim$ means that for any $x,y \in X$ with $x \neq y$, there is an $s \in S^{(p)}$ such that $d_{x,s} \neq d_{y,s}$. 

    Let now $0 \neq U \subseteq \C^X$ be a $\cox{G}$-invariant subspace with respect to the action of $\cox{\Theta}$. As $U$ is, in particular, $S^{(p)}$-invariant, \cref{lem:diagonal_actions} now tells us that $e_x \in U$ for some $x \in X$. But by \cref{ei}, it follows that $U = \C^X$. As $U$ was arbitrary, it follows that $\cox{\Theta}$ is irreducible in the considered cases.
\end{proof}

It turns out that $l=1$ is the only case where indecomposability of $X$ does not always guarantee that $\cox{\Theta}_l$ is irreducible. Using way simpler techniques, we can prove:

\begin{pro}\label{prop:ind_irr}
Let $l$ be a positive integer. Then the following are equivalent:
\begin{enumerate}[label=(\roman*)]
\item $X$ is indecomposable
\item $\Theta\colon G\to M_X(\C(q))$ is irreducible
\end{enumerate}
Moreover, if $l>1$, these conditions are also equivalent to
\begin{enumerate}[label=\alph*)]
\item[(iii)] $\overline\Theta_l\colon\overline G_l\to M_X(\C)$ is irreducible
\end{enumerate}
\end{pro}
\begin{proof}
\textit{(ii) $\Rightarrow$ (i)} and $\textit{(iii) $\Rightarrow$ (i)}$ have already been dealt with in \cref{pro:irr_implies_ind}, so we are left with proving the implications \textit{(i) $\Rightarrow$ (ii)} and, for $l > 1$,  \textit{(i) $\Rightarrow$ (iii)}.

Suppose that $X$ is indecomposable. Let $U$ be a non-trivial subspace of $\C(q)^X$ that is $G$-invariant. As $X$ is of class $d$, we see for all $x \in X$ that $dx\in\text{Soc}(G)$ which implies that $\Theta(dx)= D_x^d = \mathrm{diag}(1,\ldots,q^d,\ldots,1)$ with $q^d \neq 1$ in the position of $x$. Considering all matrices $\Theta(dx)$ ($x \in X$) proves that for any $x,y \in X$ there is a $g \in \Soc(G)$ with $d_{x,g} \neq d_{y,g}$. By \cref{lem:diagonal_actions}, $U$ contains a unit vector $e_x$ and \cref{ei} implies that $U = \C(q)^X$. Therefore, $\Theta$ is irreducible.

If $l > 1$, we see for $x \in X$ that $\cox{\Theta}_l(dx)= \mathrm{diag}(1,\ldots,\zeta_{ld}^d,\ldots,1)$ with $\zeta_{ld}^d = \zeta_l \neq 1$ in the position of $x$. If $X$ is indecomposable, the same line of reasoning can now be applied to prove the irreducibility of $\cox{\Theta}_l$ for $l > 1$.
\end{proof}
\begin{rem}\label{rmk:d2_irr}
For $l=1$,  \cref{pro:irr_implies_ind} still shows that the irreducibility of $\overline \Theta\colon \overline G\to M_X(\C)$ implies the indecomposability of the underlying solution, but the other implication does not hold.  

Indeed, consider \cref{exa:cyclic_cycle_set} for $n=2$: in that case, $X=\{x_1,x_2\}$ and $x_i \ast x_j = x_{\sigma(j)}$ where $\sigma = (1 \ 2)$. Then $X$ has Dehornoy class 2 and $|\G(X)| =2 \geq  2^{\frac{n}{2}}$. 
Furthermore
\[
\overline\Theta(x_1)=\begin{pmatrix}
0&-1\\1&0\end{pmatrix} \quad \textnormal{and} \quad \overline\Theta(x_2)=\begin{pmatrix}
0&1\\-1&0\end{pmatrix}=-\overline\Theta(x_1).
\] These matrices are simultaneously diagonalizable over $\C$, the eigenvalues being $\pm i$, so $\cox{\Theta}$ is not irreducible.
\end{rem}

We close this section with a number-theoretic application of our results.

\begin{exa} For $n>2$, consider the cyclic cycle set $X=\{x_1,\dots,x_n\}$ from \cref{exa:cyclic_cycle_set} with $x_i \ast x_j = x_{\sigma(j)}$ where $\sigma = (1 \ 2  \ldots \ 
n)$. Then $X$ is of class~$n$ and indecomposable. As $d = n > 2$, $\overline\Theta$ is irreducible by \cref{thm:irreducibility_of_theta_bar}. Moreover, by \cite[Theorem 5]{serre_repr}, a representation~$\rho$ of a finite group $G$ is irreducible if and only if $\frac{1}{|G|}\sum\limits_{g\in G} |\text{Tr}(\rho(g))|^2=1$. We now apply this formula to the representation $\overline\Theta$:

For any $g\in\overline G$, write $g=\sum\limits_{x_i\in X} a_i x_i$ ($0 \leq a_i < n$) and define its \emph{length} as $\overline\ell(g)=\sum\limits_{x_i\in X}a_i$.
Then $\lambda_g =\sigma^{-\overline\ell(g)}$. Moreover, $\sigma^k$ stabilizes a point if and only if $k$ is a multiple of $n$, and in this case, $\lambda_g$ stabilizes $X$ point-wisely. Thus the trace of $g$ is non-trivial if and only if $n$ divides $\overline\ell(g)$ and in this case, $\text{Tr}(\overline\Theta(g))=\sum_{i=1}^n \zeta_d^{a_i}$.

As $\overline\Theta$ is irreducible, we have $\frac{1}{|\overline G|}\sum\limits_{g\in\overline G}|\text{Tr}(\overline\Theta(g))|^2=1$. We conclude that $$\sum\limits_{\substack{0\leq a_1,\dots,a_n<n\\a_1+\dots+a_n \equiv 0\text{ mod } n}} \left|\zeta_d^{a_1}+\dots+\zeta_d^{a_n}\right|^2=n^n.$$
\end{exa}

\section{Induction} \label{sec:induction}

The aim of this section is the description of monomial representations in terms of induced representations.

We now choose an element $x_0 \in X$ and define $G_0 = \{ g \in G\mid \lambda_g(x_0) = x_0 \}$ and $\overline G_{l,0}=\{g\in\overline G_l\mid \lambda_g(x_0)=x_0\}$ for $l\geq 1$.
Recall that we can write any element $g$ in the structure brace $G$ as $g=\sum\limits_{x\in X} g_xx$.

\begin{pro}\label{prop:c0}
    The mapping
    \[
    c_0: G \to \Z; \quad g \mapsto g_{x_0}
    \]
    satisfies the following property: for $g \in G_0$, $h \in G$, we have
    \[
    c_0(gh) = c_0(g) + c_0(h).
    \]
    In particular, the restriction $c_0|_{G_0}: G_0 \to \Z$ is a group homomorphism.
\end{pro}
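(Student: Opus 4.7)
The plan is to exploit the brace relation between $+$ and $\circ$ on $G$, namely
\[
gh = g + \lambda_g(h),
\]
which follows from $\lambda_a(b) = -a + a\circ b$. Reading off the $x_0$-coordinate of both sides would give
\[
c_0(gh) = g_{x_0} + \bigl(\lambda_g(h)\bigr)_{x_0},
\]
so the only thing to establish is $(\lambda_g(h))_{x_0} = h_{x_0}$ whenever $g \in G_0$.

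For that step I would use the fact, recalled in the preliminaries, that the $\lambda$-action of $g \in G$ on the brace $G$ is the additive extension of the $\lambda$-action of $g$ on $X$. Concretely, writing $h = \sum_{x \in X} h_x x$, we have $\lambda_g(h) = \sum_{x \in X} h_x\,\lambda_g(x)$, so after re-indexing by $y = \lambda_g(x)$,
\[
\lambda_g(h) = \sum_{y \in X} h_{\lambda_g^{-1}(y)}\, y,
\]
which identifies the coefficient of $x_0$ in $\lambda_g(h)$ as $h_{\lambda_g^{-1}(x_0)}$. When $g \in G_0$, by definition $\lambda_g(x_0) = x_0$ and hence $\lambda_g^{-1}(x_0) = x_0$, giving $(\lambda_g(h))_{x_0} = h_{x_0}$ as required.

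Combining the two displays yields $c_0(gh) = g_{x_0} + h_{x_0} = c_0(g) + c_0(h)$. For the ``in particular'' part, note that $G_0$ is a subgroup of $(G,\circ)$ (it is a stabilizer under the $\lambda$-action) and that $c_0$ maps the identity of $G$ to $0$, so the displayed identity, restricted to $g,h \in G_0$, directly gives a homomorphism $G_0 \to \Z$. The only subtle point is making sure that $\lambda_g$ really permutes the basis $X$ of the additive group $(G,+) = \Z^X$ rather than acting more complicatedly; this is precisely the content of the convention in the preliminaries identifying the $\lambda$-action on $G$ with the additive extension of its action on $X$, so no genuine obstacle arises.
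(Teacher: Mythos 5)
Your proof is correct and takes essentially the same approach as the paper: both use the brace identity $gh = g + \lambda_g(h)$ together with the fact that $\lambda_g$ permutes the additive generators $X$, so that $g \in G_0$ forces $(\lambda_g(h))_{x_0} = h_{x_0}$. Your explicit re-indexing step just spells out a detail the paper leaves implicit.
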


\begin{proof}
   We have $gh=g+\lambda_g(h)=\sum\limits_{x\in X}g_xx+\sum\limits_{x\in X}h_x\lambda_g(x)$.
   If $g\in G_0$, then $\lambda_g(x_0)=x_0$ which implies $(\lambda_g(h))_{x_0} = h_{x_0}$. Thus,
   \[
   c_0(gh)=g_{x_0}+(\lambda_g(h))_{x_0}= g_{x_0} + h_{x_0} = c_0(g)+c_0(h),
   \]
   which proves the first statement. The second statement of the proposition is now immediate.
\end{proof}

By Proposition \ref{prop:c0}, we can define the character $\chi_0: G_0 \to \C[q^{\pm 1}]\subset \C(q)$ by $g \mapsto q^{c_0(g)}=q^{g_{x_0}}$. 
\begin{lem}
The character $\chi_0\colon G_0\to\C[q^{\pm 1}]$ descends to a character $\overline\chi_{l,0}\colon\overline G_{l,0}\to \C$.
\end{lem}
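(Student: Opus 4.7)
The plan is to first specialize $\chi_0$ at $q=\zeta_{ld}$ to obtain a candidate character $G_0\to\C$ sending $g\mapsto \zeta_{ld}^{g_{x_0}}$, then show this candidate is trivial on the kernel of the projection $G_0\to\overline G_{l,0}$, so that it factors through the quotient.

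First I would identify the kernel. Since $ldG\subseteq \Soc(G)=\Ker\lambda$, every element of $ldG$ acts trivially on $X$ and in particular fixes $x_0$, so $ldG\subseteq G_0$. Moreover $ldG$ is the kernel of the projection $G\to\overline G_l$; as this projection is $\lambda$-equivariant, it sends $G_0$ onto $\overline G_{l,0}$ with kernel exactly $ldG$. Hence $\overline G_{l,0}\cong G_0/ldG$, and the task reduces to showing that the specialization of $\chi_0$ at $q=\zeta_{ld}$ is trivial on $ldG$.

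Next I would verify this triviality using the additive structure. Since $(G,+)$ is free abelian with basis $X$, the $x_0$-coordinate map $g\mapsto g_{x_0}$ is a group homomorphism from $(G,+)$ to $\Z$. Therefore, for any $g=ldh\in ldG$, one has $g_{x_0}=ld\cdot h_{x_0}$, and thus
\[
\chi_0(g)\bigm|_{q=\zeta_{ld}}=\zeta_{ld}^{ld\cdot h_{x_0}}=1.
\]
This shows the specialization kills $ldG$ and therefore factors through $\overline G_{l,0}$, giving a well-defined map $\overline\chi_{l,0}\colon\overline G_{l,0}\to\C$ by $\bar g\mapsto \zeta_{ld}^{g_{x_0}}$.

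Finally I would note that $\overline\chi_{l,0}$ is a character: by Proposition \ref{prop:c0}, $c_0$ is a group homomorphism on $G_0$, so $\chi_0=q^{c_0(\cdot)}$ is multiplicative on $G_0$, and this multiplicativity is preserved under specialization and passage to the quotient. There is no real obstacle here, since all three required facts — the inclusion $ldG\subseteq G_0$, the divisibility $ld\mid g_{x_0}$ for $g\in ldG$, and the homomorphism property of $c_0|_{G_0}$ — are either immediate from the brace structure or already established in Proposition \ref{prop:c0}; the only small subtlety worth making explicit is that coordinate extraction is additive, which relies on the freeness of $(G,+)$ on $X$.
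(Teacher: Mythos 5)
Your proof is correct and follows essentially the same route as the paper: specialize $\chi_0$ at $q=\zeta_{ld}$, identify $\overline G_{l,0}$ with $G_0/ldG$ via the kernel computation $G_0\cap ldG=ldG$ (using $ldG\subseteq\Soc(G)\subseteq G_0$), and factor through the quotient. The one point you spell out that the paper leaves implicit is the verification that the specialized character kills $ldG$ (i.e.\ that $ld\mid g_{x_0}$ for $g\in ldG$, by additivity of coordinate extraction on $(G,+)\cong\Z^{(X)}$), which is a worthwhile clarification but not a different argument.
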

\begin{proof}
With the specialization $\text{ev}_{ld}\colon\C[q^{\pm 1}]\to\C$; $q \mapsto \zeta_{ld}$, we obtain the character $\text{ev}_{ld}\chi_0: G_0 \to \C$; $g \mapsto \zeta_{ld}^{g_{x_0}}$. Recall that $\overline G_l=G/ldG$ and $ldG\subseteq\text{Soc}(G)\subseteq G_0$. Thus, $\text{ev}_{ld}\chi_0$ factors uniquely as $\overline\chi_{l,0}$ through the canonical projection $G_0 \twoheadrightarrow G_0/ldG$. Furthermore,
\[
\text{ker}(G_0\to\overline G_{l,0})=G_0\cap \text{ker}(G\to\overline G_l)=G_0\cap ldG=ldG.
\]
Thus, $G_0/ldG=\overline G_{l,0}$ and $\overline\chi_{l,0}$ is well-defined.
\end{proof}

For a commutative ring $R$, a group $G$ and a one-dimensional character $\chi: G \to R^{\times}$, we denote by $R_{\chi}$ the $R[G]$-module that is uniquely defined by the scalar multiplication $rg \cdot s = r\chi(g)s$.

We can now show that the monomial representations of indecomposable cycle sets are induced:
\begin{thm} \label{thm:generic_monomials_are_induced}
Let $X$ be an indecomposable cycle set and $x_0$ be an element of $X$. We have the following isomorphisms:

\begin{enumerate}[label=\alph*)]
\item $\C(q)^X \cong \Ind_{G_0}^G \C(q)_{\chi_0}$, where $\C(q)^X$ is the $\C(q)[G]$-module associated with the monomial representation $\Theta$.
\item $\C[q^{\pm 1}]^X \cong \Ind_{G_0}^G \C[q^{\pm 1}]_{\chi_0}$, where $\C[q^{\pm 1}]^X$ is the $\C[q^{\pm 1}][G]$-module associated with the monomial representation $\Theta$.
\item $\C^X \cong \Ind_{\overline G_{l,0}}^{\overline G_l} \C_{\overline \chi_{l,0}}$, where $\C^X$ is the $\C[\overline G_l]$-module associated with the monomial representation $\overline\Theta_l$.
\end{enumerate}
\end{thm}

\begin{proof}
    We only deal with case b) as all other cases follow from a suitable extension/specialization of scalars. Therefore, writing $R = \C[q^{\pm 1}]$, it is easily seen that for $x \in X$, $g \in G$, we have $g\cdot Re_x = Re_{\lambda_g(x)}$, therefore $G$ permutes the family of submodules $(Re_x)_{x \in X}$. As $X$ is indecomposable, $G$ acts transitively on $X$, therefore $(Re_x)_{x \in X}$ is a system of imprimitivity for the $R[G]$-module $R^X$. Pick an $x_0 \in X$ and observe that
    \[
    G_0 = \{g \in G : \lambda_g(x_0) = x_0 \} = \{ g \in G: g\cdot(Re_{x_0}) = Re_{x_0} \},
    \]
    so \cref{pro:imprimitive_modules_are_induced} implies that there is an isomorphism of $R[G]$-modules
    \[
    R^X \cong \Ind_{G_0}^G Re_{x_0}.
    \]
    We are left with determining the character associated with the $R[G_0]$-module $Rx_0$. Let $g \in G_0$ and write $g = \sum_{x \in X}g_xx$, then by \cref{eq:theta_in_terms_of_g},
    \[
    g \cdot e_{x_0} = D_gP_ge_{x_0} = D_ge_{x_0} = q^{g_{x_0}}e_{x_0} = \chi_0(g)e_{x_0}.
    \]
    This proves that $Re_{x_0} \cong A_{\chi_0}$, as $R[G_0]$-modules. Therefore,
    \[
    R^X \cong \Ind_{G_0}^G R_{\chi_0}.\qedhere
    \]
\end{proof}

\section*{Acknowledgements}
This work is partially supported by the project OZR3762 of Vrije Universiteit Brussel, 
by FWO and CNRS via the International Emerging Actions project 328226  and
by Fonds Wetenschappelijk Onderzoek - Vlaanderen, via the Senior Research Project G004124N.

The first author expresses his gratitude to the Humboldt foundation that had supported him by means of a Feodor Lynen fellowship when beginning the research for this project.

The third author is supported by Fonds Wetenschappelijk Onderzoek - Vlaanderen, via a PhD Fellowship fundamental research, grant 11PIO24N. 

\emergencystretch=1em
\printbibliography
\end{document}